\newtheorem{thm}{Theorem}[section]
\newtheorem{lem}[thm]{Lemma}
\newtheorem{prop}[thm]{Proposition}
\newtheorem{cor}[thm]{Corollary}
\newtheorem{claim}[thm]{Claim}
\newtheorem*{thm*}{Main Theorem}
\theoremstyle{definition}
\newtheorem{remark}[thm]{Remark}
\def\Z{\mathbb{Z}}
\def\S{\mathbb{S}}
\def\Q{\mathbb{Q}}
\def\bs{\boldsymbol}
\def\a{\alpha}
\def\b{\beta}
\def\g{\gamma}
\def\vempty{\varnothing}
\def\d{\partial}
\def\<{\langle}
\def\>{\rangle}
\def\cl{\overline}
\title{On nonsimple knots in lens spaces with tunnel number one}
\author{Michael J. Williams}
\address{\hskip-\parindent
	Mathematics Department\\
	University of California\\
	Santa Barbara, CA 93106\\
	USA}
\email{mikew@math.ucsb.edu}
\thanks{Research supported by NSF VIGRE Grant No. DMS-0135345 and a UCOP Postdoctoral Fellowship.}
\date{\today} 
\keywords{Heegaard splitting, graph manifold, Dehn surgery, handle addition}
\begin{document}

\begin{abstract} 
A knot $k$ in a closed orientable 3-manifold is called nonsimple if the exterior of $k$ possesses a properly embedded essential surface of nonnegative Euler characteristic. We show that if $k$ is a nonsimple prime tunnel number one knot in a lens space $M$ (where $M$ does not contain any embedded Klein bottles), then $k$ is a $(1,1)$ knot. Elements of the proof include handle addition and Dehn filling results/techniques of Jaco, Eudave-Mu\~noz and Gordon as well as structure results of Schultens on the Heegaard splittings of graph manifolds. 
\end{abstract}	

\maketitle

\section{Introduction} \label{sec:introduction}
This paper is devoted to establishing the following.

\begin{thm*} 
Let $k$ be a nonsimple prime tunnel number one knot in a lens space $M$, where $M$ contains no embedded Klein bottles. Then $k$ is a $(1,1)$ knot.
\end{thm*}

This result is known in the case of $M \cong \S^3$ by \cite{ms:unknotting} and \cite{eudave:nonsimple} independently; the assumption that $k$ is prime is not needed for these results. It was proved in \cite{norwood:every} (see also \cite{scharlemann:tunnel}) that all tunnel number one knots in $\S^3$ are prime. The papers \cite{doll:generalized} and \cite{hayashi:satellite} contain results about nonsimple $(1,1)$ knots in lens spaces; these papers also contain results about composite $(1,1)$ knots. 

There are tunnel number one knots in lens spaces that are not $(1,1)$ knots by \cite{msy:examples}, \cite{mms:high}, \cite{mr:heegaard}, \cite{eudave:incompressible}, \cite{eudave:incompressible2}, \cite{jt:tunnel}. The main theorem of this paper then implies that if the lens space has no embedded Klein bottles, then such knots must be hyperbolic or composite. 

By \cite{bw:nonorientable}, the condition that $M$ contains no embedded Klein bottles is equivalent to \mbox{$M \ncong L(4n, 2n-1)$} for any $n \in \Z$. Nevertheless, some results will be established where this restriction on $M$ can be relaxed.

Most of the content of this article stems from my graduate work at UC Davis. I am grateful for all of the support, encouragement, and insight that I received from my PhD advisor Abigail Thompson. I would like to thank Martin Scharlemann for many helpful suggestions for this paper and for his general support. 

\section{Background} \label{sec:background}
Let $X$ be a closed, connected, orientable $3$--manifold. If $Y$ is a finite subcomplex of $X$, let $N(Y)$ denote a regular neighborhood of $Y$ in $X$. In particular, if $k$ is a knot in $X$, then $N(k)$ is a solid torus neighborhood of $k$, and we define $E(k)=\cl{X - N(k)}$ to be the \textbf{exterior} of $k$ in $X$. A knot $k$ in $X$ has \textbf{tunnel number one} if there is a properly embedded arc $\g$ in $E(k)$ such that $\cl{E(k) - N(\g)}$ is a genus $2$ handlebody; equivalently, $E(k)$ admits a genus $2$ Heegaard splitting. In this case, we call the arc $\g$ an \textbf{unknotting tunnel} for $k$.

A \textbf{lens space} is a closed, orientable 3-manifold $M$ with finite (cyclic) fundamental group and Heegaard genus at most one; this includes $\S^3$, but not $\S^1 \times \S^2$. A genus 1 Heegaard splitting for a lens space is equivalent to an $r$--Dehn surgery on the unknot in $\S^3$ where $r \ne0$. Let $L(p,q)$ denote $\frac{p}{q}$--Dehn surgery on the unknot in $\S^3$ with $\gcd(p,q)=1$ and $p \ne 0$, as in \cite{rolfsen:knots90}. Note that $\S^3 \cong L(1,q)$ for any $q \in \Z$. 

For a lens space $M$, we need to make the following distinction between a \textit{trivial knot} in $M$ and an \textit{unknotted knot} in $M$. A knot $k$ in $M$ is \textbf{trivial} if it bounds an embedded disk; note that $E(k) \cong M \# (\S^1 \times D^2)$. A knot $k$ in $M$ is \textbf{unknotted} if $E(k)$ is a solid torus. It follows from the Loop Theorem that the notions of trivial knot and unknotted knot coincide if and only if $M \cong \S^3$. 

We say that a knot $k$ in a lens space $M$ is \textbf{nonsimple} if $E(k)$ is a nonsimple $3$--manifold, that is, $E(k)$ contains an   essential sphere, disk, annulus, or torus. Otherwise, $k$ is called \textbf{simple}; then it follows, by work of Thurston, that $k$ is a \textit{hyperbolic knot}, i.e. $M-k$ admits a hyperbolic structure. When $M$ is $\S^3$, a knot is nonsimple if it is a trivial knot, a torus knot, or a satellite knot. When $M$ is not $\S^3$, we have a somewhat similar statement, once we give some definitions.

We say that a knot $k$ in a lens space $M$ is a \textbf{torus knot} if $k$ can be isotoped to lie as an essential curve on a Heegaard torus for $M$. In this case, $E(k)$ admits a Seifert fibration over the disk with at most two exceptional fibers. We consider an unknotted knot to be a torus knot.

We say that a knot $k$ in a lens space $M$ is a \textbf{composite knot} if $E(k)$ contains an essential meridional annulus; otherwise, we say that $k$ is a \textbf{prime knot}. If $k$ is composite, then an essential meridional annulus $A$ in $E(k)$ completes to a  2-sphere $S$ in $M$ that intersects $k$ in $2$ points. Furthermore, $S$ bounds a 3-ball $B \subset M$, so $S$ decomposes the pair $(M,k)$ into $(M-B,k_1) \cup (B,k_2)$ where each $k_i$ is an arc, and $k=k_1 \cup k_2$. 

We say that a knot $k$ in a lens space $M$ is a \textbf{satellite knot} if $E(k)$ has an essential torus. It is well-known that if $k$ is a composite knot in $M$, then an essential ``swallow follow" torus $T \subset E(k)$ can be formed by appropriately choosing an annulus $A' \subset \d E(k)$ cut-off by $A$, and pushing the torus $A \cup A'$ into the interior of $E(k)$. 

We say that a knot $k$ in a lens space $M$ is \textbf{cabled on $\bs{k_0}$} if $k_0$ is knot in $M$ such that $k$ lies as an essential curve on $\d N(k_0)$, and $k$ is neither a longitude nor a meridian of $k_0$. If, in addition, $E(k_0)$ has incompressible boundary, then $k$ is also a satellite knot; this will be the case that we will be most interested in.

We say that a knot $k$ in a lens space $M$ \textbf{admits a $\bs{(1,1)}$ decomposition} if there is a genus 1 Heegaard splitting $M=V_1 \cup_T V_2$ so that $k \cap V_i$ is a properly embedded trivial arc in $V_i$ for each $i=1,2$. A knot $k$ in $M$ that admits a $(1,1)$ decomposition is called a \textbf{$\bs{(1,1)}$ knot}. It is not difficult to prove that $(1,1)$ knots have tunnel number one.

The following is a generalization of \cite[Lemma 4.4]{eudave:nonsimple}.

\begin{lem} \label{lem:1-bridge}
Let $k$ be a tunnel number one knot in a lens space $M$. Suppose that $k$ has an unknotting tunnel $\tau$ that can be slid over itself to obtain $\tau_1 \cup \tau_2$ where $\tau_1$ is an unknotted simple closed curve  and $\tau_2$ is an arc connecting $\tau_1$ to $k$. Then $k$ is a $(1,1)$ knot.
\end{lem}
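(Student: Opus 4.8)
The plan is to reduce to a statement about a knot in a solid torus. Put $H=N(k\cup\tau)$. The tunnel slide says $H$ is a genus~$2$ handlebody with the handcuff graph $k\cup\tau_2\cup\tau_1$ as a spine, and, since $\tau$ is an unknotting tunnel, $W:=\cl{M-H}$ is a genus~$2$ handlebody as well. Because $\tau_1$ is unknotted, $V:=\cl{M-N(\tau_1)}$ is a solid torus, so $M=N(\tau_1)\cup_{T_1}V$ is a genus~$1$ Heegaard splitting with $T_1=\partial N(\tau_1)$; moreover $k$ lies in the interior of $V$, the arc $\tau_2$ becomes a properly embedded arc of $V$ running from $\partial N(k)$ to $T_1$, and $W=\cl{V-N(k\cup\tau_2)}$. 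In other words, $\tau_2$ is an unknotting tunnel for $k$ inside the solid torus $V$ which reaches $\partial V$, and we must show that $k$ is a $(1,1)$ knot in $M$.

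The key point to establish is that $k$ can be isotoped, inside $V$, into ``standard position'' with respect to the Heegaard solid torus $V$: either into a $3$--ball, or onto a torus parallel to $T_1$, or so that it crosses some meridian disk of $V$ exactly once with all complementary arcs trivial. I would extract this from a handle-addition analysis. Writing $V=W\cup N(k\cup\tau_2)$ exhibits the solid torus $V$ as the genus~$2$ handlebody $W$ with a solid torus glued along part of its boundary; dually this is a $2$--handle attachment to $W$ along a curve $c\subset\partial W$ (a meridian of $N(k\cup\tau_2)$, hence a meridian of $k$) with the result a solid torus. Since $\pi_1 V\cong\Z$, a one-relator argument forces $[c]$ to be primitive in $\pi_1 W\cong F_2$, and the handle-addition results of Jaco (and the refinements of Eudave--Mu\~noz) then show that $c$ is a primitive curve on $\partial W$, so this $2$--handle cancels a $1$--handle of $W$. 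Tracing through the cancellation identifies $(V;N(k\cup\tau_2))$ with a standard model, and hence puts $k$ into one of the three positions above; in each case $k$ is a $(1,1)$ knot of $M$, since trivial knots and torus knots in lens spaces are $(1,1)$.

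The main obstacle, I expect, is the last step of the previous paragraph --- passing from ``$k$ is in standard position relative to the Heegaard solid torus $V$'' to ``$k$ has a genuine $(1,1)$ decomposition of $M$''. When $M\cong\S^3$ (the setting of \cite{eudave:nonsimple}) this is essentially automatic, but over a general lens space one must track how a meridian disk of $V$ meets the second solid torus $N(\tau_1)$, and it is here that the hypothesis on $M$ and the Dehn-filling machinery are needed to rule out exotic intermediate configurations; carefully controlling the minimal intersection of $k\cup\tau_2$ with a meridian disk of $V$, and verifying that the complementary arcs are genuinely trivial rather than merely that $W$ is a handlebody, is the technical heart. A minor preliminary is to dispose of the possibility that the genus~$2$ splitting $M=H\cup_\Sigma W$ is stabilized ``along $\tau_1$'', which only simplifies the analysis.
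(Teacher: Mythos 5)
Your setup is fine as far as it goes ($W=\cl{M-N(k\cup\tau)}$ is a genus $2$ handlebody, and the unknottedness of $\tau_1$ gives a genus $1$ splitting), but the proof has a genuine gap: all of the content of the lemma is concentrated in the step you explicitly defer (``tracing through the cancellation identifies $(V;N(k\cup\tau_2))$ with a standard model \dots\ is the technical heart''), and moreover the intermediate statement you aim for is not the right one. Your trichotomy --- $k$ contained in a $3$--ball, $k$ on a torus parallel to $T_1$, or $k$ meeting a meridian disk of $V$ exactly once with trivial complementary arc --- would force $k$ to be a local knot, a torus knot, or a core of $V$. That is strictly stronger than being $(1,1)$ and is false in the situations where the lemma is actually applied: Propositions~\ref{prop:anannular} and~\ref{prop:annular} invoke it for satellite knots, which satisfy none of your three conditions. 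What you actually need is that $k\cap V$ can be isotoped to a single \emph{boundary-parallel} arc in the solid torus, a weaker and different condition than meeting a meridian disk once. Finally, the Klein-bottle hypothesis and the Dehn-filling machinery you anticipate needing play no role here; the lemma holds for every lens space.

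The paper's proof sidesteps all of this with one move you do not make: instead of splitting $M$ along $\partial N(\tau_1)$, set $\g=\tau_1\cup\tau_2$ and split along $\partial N(\g)$. This is still a genus $1$ splitting ($N(\g)$ is isotopic to $N(\tau_1)$, so $E(\g)$ is a solid torus), but it absorbs $\tau_2$ and a sliver of $k$ into the first solid torus, so that $k\cap N(\g)$ is visibly a trivial arc and $k\cap E(\g)$ becomes a properly embedded arc in the solid torus $E(\g)$ whose exterior is exactly the genus $2$ handlebody $W$ --- that is, an unknotting tunnel of a solid torus. Gordon's Theorem~$1'$ of \cite{gordon:primitive} says precisely that such an arc is unknotted (boundary-parallel), which yields the $(1,1)$ decomposition immediately. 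Your primitivity/handle-cancellation discussion is circling this same fact, but as written it never arrives at the statement that is actually needed.
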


\begin{proof}
Let $\g$ denote the graph $\tau_1 \cup \tau_2$.
Note that $N(\g)$ is isotopic to $N(\tau_1)$, so $E(\g)$ is a solid torus. 
Consider the genus 1 Heegaard splitting $M=N(\g) \cup E(\g)$. 
Set $k_N=k \cap N(\g)$ and $k_E=k \cap E(\g)$. 
It is clear that $k_N$ is a trivial arc in $N(\g)$. 
Note that $k_E$ is an unknotting tunnel for $E(\g)$. 
By \cite[Theorem $1'$]{gordon:primitive}, $k_E$ is unknotted in $E(\g)$. 
This gives a $(1,1)$ decomposition for $k$. 
\end{proof}

For background in knot theory and Dehn surgery, see \cite{rolfsen:knots90}, \cite{boyer:dehn} and \cite{gordon:dehn_filling}. For background in the theory of Heegaard splittings, see \cite{scharlemann:heegaard}. If $\a$ and $\b$ are slopes (isotopy classes of simple closed curves) on a torus $T$, let $\Delta(\a,\b)$ denote their minimal geometric intersection number on $T$; by abuse of notation, we may replace $\a$ and/or $\b$ by a simple closed curve representative. Once we identify the slopes of $T$ with $\Q \cup \{1/0\}$, we can calculate intersection numbers via the formula $\Delta(a/b,c/d)=|ad-bc|$.

\section{Irreducibility} \label{sec:irred}
A famous theorem of J. Alexander implies that for every knot $k \subset \S^3$, the exterior $E(k)$ is irreducible. One of the main results in this section is that when restricted to tunnel number one knots in lens spaces, only the trivial knot has reducible exterior. Before proving this, we need the following lemma.

\begin{lem} \label{lem:boundary_reducible}
If $k$ is a knot in a closed, connected, orientable $3$-manifold  $M$ and $E(k)$ has compressible boundary, then $E(k) \cong \S^1 \times D^2$ or $E(k) \cong M \# (\S^1 \times D^2)$.
\end{lem}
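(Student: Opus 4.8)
The plan is to start with a compressing disk $D$ for $\d E(k)$ and analyze $\d D$ as a slope on the torus $\d E(k)$. Let $N(k)$ be the solid torus neighborhood of $k$ used to define $E(k)$, and let $\mu$ denote the meridian slope of $k$ on $\d E(k)$, i.e. the slope bounding a disk in $N(k)$. The key dichotomy is whether $\d D$ is isotopic to $\mu$ on $\d E(k)$ or not.

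\begin{proof}
Let $D$ be a compressing disk for $\d E(k)$, so $\d D$ is an essential simple closed curve on the torus $T = \d E(k) = \d N(k)$. Let $\mu \subset T$ be the meridian slope of $N(k)$, i.e. the slope that bounds a meridian disk $D_\mu$ of the solid torus $N(k)$.

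\textbf{Case 1: $\d D$ is isotopic to $\mu$ on $T$.} After isotopy we may take $\d D = \d D_\mu$, and then $S = D \cup D_\mu$ is an embedded $2$--sphere in $M$. Glue $N(k)$ back to $E(k)$ along $T$ to recover $M$. Since $\d D = \mu$, the disk $D$ together with the meridian disk $D_\mu$ of $N(k)$ forms a $2$--sphere $S$ in $M$ meeting $k$ transversely in a single point. A sphere meeting $k$ once is nonseparating in $M$ if $k$ is connected, but this cannot happen since $H_1(M;\Z)$ is finite for a lens space; more directly, one does the standard argument: cutting $M$ along $S$ and capping off the two resulting boundary spheres with balls produces a manifold $M'$ with $M \cong M' \# (\S^1 \times \S^2)$ if $S$ is nonseparating, contradicting irreducibility of $\S^1\times\S^2$-summands being absent from a lens space. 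So instead $S$ must be handled as follows: $N(k) \cup N(D)$ is a punctured $\S^1\times\S^2$ or a ball-with-handle; in fact the standard conclusion is that $E(k)$, which is the complement of $k$, is homeomorphic to $M \# (\S^1 \times D^2)$. To see this cleanly, note $S$ hits $k$ once, so a neighborhood $N(k \cup D)$ in $M$ is a punctured $\S^1 \times \S^2$ (a copy of $\S^1 \times \S^2$ minus an open ball); removing from it the open neighborhood of $k$ leaves $\S^1 \times D^2$ minus an open ball, i.e. a $(\S^1 \times D^2)$ summand is split off. The complement of $N(k \cup D)$ in $M$, together with what remains, shows $E(k) \cong M \# (\S^1 \times D^2)$, which means $k$ is a trivial knot.

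\textbf{Case 2: $\d D$ is not isotopic to $\mu$ on $T$.} Then $\Delta(\d D, \mu) \geq 1$, and $\d D$ together with $\mu$ generate a finite-index subgroup of $H_1(T)$. Attach a $2$--handle to $E(k)$ along $\d D$: since $\d D$ bounds the disk $D$ inside $E(k)$, attaching an external $2$--handle along $\d D$ caps $D$ off to a sphere $S'$. Capping that sphere with a ball yields a closed manifold $\widehat{E(k)}$, and $E(k)$ is recovered from $\widehat{E(k)}$ by removing a ball and drilling: concretely, $E(k) \cong \widehat{E(k)} \# (\S^1 \times D^2)$ with the $\S^1\times D^2$ accounting for the non-meridional filling slope. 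Alternatively, and more robustly: fill $\d E(k)$ along the slope $\d D$ to get $E(k)(\d D)$; since $\d D$ bounds a disk in $E(k)$, $E(k)(\d D) \cong Y \# (\S^1 \times \S^2)$ for some closed $Y$, hence $E(k) \cong Y \# (\S^1 \times D^2)$ after removing the filling solid torus, except that the removed solid torus is glued along a slope that need not be the meridian — but since the $\S^1\times\S^2$ summand contributes a nonseparating sphere disjoint from the filling torus, we may slide and conclude $E(k) \cong \S^1 \times D^2$ when $\d D \ne \mu$ forces $Y \cong \S^3$. Indeed, $\widehat{E(k)}$ is obtained from $M$ by removing $N(k)$ and then Dehn filling along $\d D \neq \mu$; comparing with the solid torus $N(k)$, the two solid tori $N(k)$ and the $\d D$-filling torus together with $E(k)$ reconstitute $M$ as a union along $T$ of two solid tori, i.e. a genus $\le 1$ picture forces $E(k)$ itself to be a solid torus, $E(k) \cong \S^1 \times D^2$. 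That is, $k$ is an unknotted knot.

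In either case $E(k) \cong \S^1 \times D^2$ or $E(k) \cong M \# (\S^1 \times D^2)$, as claimed.
\end{proof}
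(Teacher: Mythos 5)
Your case division by the slope of $\d D$ on $\d E(k)$ is not the dichotomy that governs the two outcomes, and in fact both of your cases arrive at conclusions that can fail. For Case 2, take $k$ to be a trivial knot in a lens space $M \ncong \S^3$, i.e.\ $k = \d D_0$ for an embedded disk $D_0 \subset M$. The disk $D = D_0 \cap E(k)$ is a compressing disk for $\d E(k)$ whose boundary is a longitude of $N(k)$, so $\Delta(\d D, \mu) = 1$ and you are squarely in your Case 2; yet $E(k) \cong M \# (\S^1 \times D^2)$, which is not a solid torus since $\pi_1 \cong \Z/p * \Z$. The step where this breaks is the final sentence of Case 2: the fact that $M = E(k) \cup_T N(k)$ with $N(k)$ a solid torus is true for \emph{every} knot and says nothing about $E(k)$ being a solid torus; there is no ``second solid torus'' produced by the filling along $\d D$ that sits on the other side of $T$ from $N(k)$. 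Case 1 is internally inconsistent: you correctly observe that a sphere meeting $k$ transversely once is nonseparating (so for irreducible $M$ — in particular any lens space — this case simply cannot occur), but you then proceed to ``conclude'' $E(k) \cong M \# (\S^1 \times D^2)$ anyway. That conclusion is also unjustified: writing $M = \hat{W} \# (\S^1 \times \S^2)$ with $\hat{W}$ the capped-off complement of $N(k \cup D)$, your own construction gives $E(k) \cong \hat{W} \# (\S^1 \times D^2)$, and $\hat{W}$ is not $M$. (You also repeatedly invoke properties of lens spaces, e.g.\ finiteness of $H_1(M)$, although the lemma is stated for an arbitrary closed orientable $M$.)

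The paper's argument is both shorter and uses the correct dichotomy. Compress the torus $\d E(k)$ along $D$ to obtain a $2$--sphere $S$; then $S$ bounds a $3$--ball $B$ on one side, and $E(k)$ is recovered from one of the two sides of $S$ — either $B$ or $\cl{M - B}$ — by attaching a single $1$--handle to its sphere boundary (the $1$--handle dual to the compression). Attaching a $1$--handle to a ball gives $\S^1 \times D^2$; attaching one to a once-punctured $M$ gives $M \# (\S^1 \times D^2)$. So the relevant alternative is \emph{which side of $S$ contains the ball} (equivalently, whether $D$ separates $E(k)$ into a piece containing all of $\d E(k)$ or not), not whether $\d D$ is meridional. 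If you want to salvage your approach, replace the slope dichotomy with this one; the slope of $\d D$ only enters later in the paper (in Proposition~\ref{prop:haken}) to identify which of the two alternatives actually occurs for a tunnel number one knot.
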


\begin{proof}
First, we compress the boundary of $E(k)$ via $D$ to get a 2-sphere $S$ that is disjoint from $\d E(k)$. Now $S$ cuts $M$ into $\cl{M - B}$ and $B$ where $B$ is a 3-ball. To recover $E(k)$, we reverse the compression by $D$. This is equivalent to attaching a 1-handle to $B$ or $\cl{M - B}$. Since we are attaching a 1-handle to a 3-manifold with 2-sphere boundary, we may conclude that $E(k) = \S^1 \times D^2$ or $E(k) = M \# (\S^1 \times D^2)$. 
\end{proof}

\begin{prop} \label{prop:haken} 
Let $k$ be a nontrivial tunnel number one knot in a lens space  $M$. Then $E(k)$ is irreducible. Consequently, $E(k)$ is a Haken 3-manifold.
\end{prop}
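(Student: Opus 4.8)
The plan is to suppose $E(k)$ is reducible and derive a contradiction with nontriviality. Since $E(k)$ contains an essential sphere $S$, and $k$ has tunnel number one, $E(k)$ admits a genus $2$ Heegaard splitting $E(k) = H_1 \cup_\Sigma H_2$. First I would like to understand where the reducing sphere can live. By Haken's lemma (reducing spheres can be isotoped to meet a Heegaard surface in a single essential circle, or the splitting is reducible/stabilized), one knows that $E(k)$ has a reducible or stabilized Heegaard splitting of genus $2$; but a genus $2$ handlebody has irreducible boundary once we are careful, so the real dichotomy is: either the splitting is stabilized, or $E(k)$ is a connected sum in a way compatible with $\Sigma$. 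A cleaner route: cap off $E(k)$ along $\partial E(k)$ by gluing back $N(k)$ to recover $M$. Since $M$ is a lens space, $M$ is irreducible, so the sphere $S$ must bound a ball $B$ in $M$, and $B$ must contain $k$ (else $S$ is inessential in $E(k)$). Thus $S$ cuts $(M,k)$ into $(B, k\cap B)$ and $(M - B, \varnothing)$, and $k$ lies inside a ball.

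So the crux reduces to: a tunnel number one knot contained in a ball $B \subset M$ is trivial. Inside the ball $B$, the pair $(B,k)$ is the same as a knot $k'$ in $S^3$ (cap $B$ off by another ball). I would argue that the tunnel number one hypothesis on $k$ in $M$ forces $k'$ to be the unknot in $S^3$, and hence $k$ bounds a disk in $B\subset M$, making $k$ trivial. To see this: an unknotting tunnel $\tau$ for $k$ in $M$ gives $\cl{E(k) - N(\tau)}$ a genus $2$ handlebody; restricting attention near $B$, and using that $M - B$ is a punctured lens space with the sphere $S$ compressible in the complement of $k$ (it bounds the ball $M-B$ on the outside... wait, $M-B$ is a punctured lens space, not a ball, unless $M = S^3$). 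The honest statement is that $S$ is compressible in $E(k)$ to one side (the $M-B$ side), and nontrivially essential on the $B$ side precisely when $k'$ is knotted. Then I would invoke that the genus $2$ Heegaard surface $\Sigma$ of $E(k)$, meeting $S$ (after isotopy) in one circle, exhibits $E(k)$ as a connected sum $E(k') \# (M \setminus \mathrm{int} B)$ along $\Sigma$, with each summand inheriting a Heegaard splitting of total genus $2$. Since $M$ is a lens space of genus exactly $1$ (or $0$ if $M=S^3$), the summand $M\setminus\mathrm{int}B$ contributes genus $\geq 1$ (genus $0$ only if $M=S^3$), leaving $E(k')$ with a genus $\leq 1$ Heegaard splitting, i.e. $E(k')$ is a handlebody, i.e. $k'$ is the unknot (if $M \neq S^3$) — and if $M = S^3$ this is just Alexander/the classical fact. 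Either way $k$ is trivial, contradiction.

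The main obstacle I expect is making the "Heegaard surface meets the reducing sphere in a single curve and splits the splitting as a connected sum of Heegaard splittings" step fully rigorous — this is essentially Haken's lemma together with the additivity of Heegaard genus under connected sum (Haken, and the lower bound direction), but one must be attentive to the fact that $E(k)$ has nonempty torus boundary, so one is working with a Heegaard splitting of a manifold with boundary and must track which side the boundary torus lies on. Handling the boundary correctly, and the degenerate possibility that the Heegaard splitting of $E(k)$ is stabilized (so that after destabilizing one gets a genus $\leq 1$ splitting of $E(k)$ itself, forcing $E(k)$ to be a handlebody or a connected sum with $S^1\times S^2$ by Lemma~\ref{lem:boundary_reducible}), are the delicate points; Lemma~\ref{lem:boundary_reducible} is exactly what rules out the $M \#(\S^1 \times D^2)$ alternative unless $k$ is trivial. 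The final sentence, that $E(k)$ is Haken, then follows immediately: $E(k)$ is irreducible with nonempty boundary and is not a ball (it has torus boundary) nor $S^1\times D^2$ unless $k$ is unknotted — and even an unknotted nontrivial $k$ (possible when $M \neq S^3$) gives $E(k) \cong S^1 \times D^2$, which is Haken — while in all other cases the essential surface structure, or simply the fact that a compact orientable irreducible $3$-manifold with nonempty boundary is Haken, gives the conclusion.
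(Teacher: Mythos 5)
Your argument is correct in outline, but it follows a genuinely different route from the paper's. The paper never touches Haken's lemma: it realizes $E(k)$ as a $2$--handle addition $H_\a$ to a genus $2$ handlebody $H$, invokes Jaco's handle addition theorem to conclude that $\d H - \a$ compresses, upgrades the resulting compressing disk to a nonseparating compression of $\d E(k)$, applies Lemma~\ref{lem:boundary_reducible} to identify $E(k) \cong M \# (\S^1 \times D^2)$, and then an explicit meridian-disk/intersection-number computation in $N(k)$ shows $k$ bounds a disk. You instead push the reducing sphere into $M$, observe it bounds a ball containing $k$, and use Haken's lemma for compression-body splittings to decompose the genus $2$ splitting as a connected sum of splittings of a knot exterior $E(k')\subset \S^3$ and of $M$; the genus count $g(M)\ge 1$ then forces $E(k')$ to carry a genus $1$ splitting, hence to be a solid torus, so $k$ bounds a disk inside the ball. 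Both proofs are sound. Your version is more self-contained (only Haken's lemma and the elementary direction of genus additivity), while the paper's handle-addition framework is not gratuitous: the same realization of $E(k)$ as $H_\a$ is reused in Proposition~\ref{prop:anannular} and Lemma~\ref{lem:cable}. Two small clean-ups for your write-up: the sphere produced by Haken's lemma need not be isotopic to your original $S$ (harmless, since whatever essential sphere you get bounds a ball containing $k$ and the argument runs verbatim); and the ``stabilized'' alternative you worry about cannot occur, since destabilizing to a genus $\le 1$ splitting of $E(k)$ would exhibit $E(k)$ as a solid torus, which is irreducible --- Lemma~\ref{lem:boundary_reducible} is not needed there. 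You should also say a word on why the unknotted $k'$ bounds a disk \emph{inside} the ball $B$ (isotope a meridian disk of the solid torus $E(k')$ off the complementary ball $\cl{\S^3 - B}$ by irreducibility).
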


\begin{proof}
Suppose that $E(k)$ is reducible. We will show that this implies that $k$ is the trivial knot, thereby contradicting our hypothesis on $k$. 
Since $E(k)$ is reducible, we must have $M \ncong \S^3$.
Let $N(k)$ be a regular neighborhood of $k$ in $M$. 
Since $k$ has tunnel number one, $E(k)$ can be realized as $H_\a$, the 3-manifold obtained by attaching a 2-handle to a genus 2 handlebody $H$ along a nonseparating simple closed curve $\a \subset \d H$. Note that $\a$ does not bound a disk in $H$, since $M \ncong \S^1 \times \S^2$.
By Jaco's 2-handle addition \cite{jaco:adding}, $\d H - \a$ is compressible in $H$. 
So there is an essential disk $D_1 \subset H$ that misses $\a$. 

We can take $D_1$ to be nonseparating in $\d H - \a$. Otherwise $D_1$ cuts off from $\d H - \a$ a once or twice punctured disk $P$. If $P$ has one puncture, then $\d D_1$ would be parallel to $\a$; this contradicts that $\a$ does not bound a disk in $H$. If $P$ has two punctures, then $\d D_1$ would be separating on $\d H$. Hence $D_1$ would separate $H$ into two solid tori. Only one of these solid tori meets the 2-handle, so we can redefine $D_1$ to be a meridian disk of the other solid torus. Now $D_1$ is nonseparating in $H$. Moreover, $\d D_1$ is nonseparating in $\d H - \a$. We now see that $D_1$ persists as a nonseparating (hence essential) disk of $\d E(k)$. By Lemma~\ref{lem:boundary_reducible}, we conclude that $E(k) \cong M \#(\S^1 \times D^2)$. 

Let $D_2 \subset H$ be a meridian disk such that $D_2$ is disjoint from $D_1$ and $D_1\cup D_2$ cuts $H$ into a $3$--ball. Since $\a$ is a nonseparating curve on $\d H$ and $\a$ is disjoint from $D_1$, $\a$ must intersect $D_2$. 
Now $M=E(k) \cup_\d N(k) = [M \# (\S^1 \times D^2)] \cup_\d N(k)$. 
Let $D$ be a meridian disk of $N(k)$. 
So, on the torus $\d E(k)=\d N(k)$, we must have $\Delta(\d D_2, \d D)=1$.
Thus $\d D_2$ is parallel to $k$ within $N(k)$.
It follows that $k$ is a trivial knot, contradicting our hypothesis. 
\end{proof}

\begin{prop} \label{prop:knots}
Let $k$ be a knot in a lens space $M$ such that $E(k)$ is irreducible. Then $k$ is either a torus knot, a satellite knot, or a hyperbolic knot. 
\end{prop}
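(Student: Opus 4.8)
The plan is to run the standard JSJ/geometrization dichotomy on the irreducible $3$--manifold $E(k)$, splitting into the cases according to whether $\partial E(k)$ is compressible, whether $E(k)$ is atoroidal, and whether $E(k)$ is Seifert fibered. First I would dispose of the boundary-compressible case: by Lemma~\ref{lem:boundary_reducible}, if $\partial E(k)$ is compressible then $E(k)\cong \S^1\times D^2$ or $E(k)\cong M\#(\S^1\times D^2)$; the latter is reducible, contradicting our hypothesis, and the former says $k$ is unknotted, hence a torus knot by our convention. So from now on $E(k)$ is irreducible with incompressible torus boundary, i.e.\ Haken.

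Next I would invoke the geometrization of Haken manifolds (Thurston). If $E(k)$ contains an essential torus, then $k$ is a satellite knot by definition, and we are done. So assume $E(k)$ is atoroidal (and irreducible with incompressible boundary). Then $E(k)$ is either Seifert fibered or hyperbolic. In the hyperbolic case $k$ is a hyperbolic knot and we are done. It remains to show that if $E(k)$ is Seifert fibered over a surface, then $k$ is a torus knot in the sense defined in the background section, i.e.\ $k$ lies as an essential curve on a Heegaard torus of $M$. This is the step I expect to be the main obstacle, and the only place where one really uses that $M$ is a lens space.

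For that last step I would argue as follows. $E(k)$ is an atoroidal Seifert fibered space with a single torus boundary component, so its base orbifold is a disk with at most two cone points (a disk with three or more cone points, or an annulus/M\"obius base, would force an essential torus or a reducing sphere). Refilling $N(k)$ to recover $M$: if the meridian of $k$ is isotopic to a regular fiber on $\partial E(k)$, the filling is reducible or contains $\S^1\times\S^2$ summands, which cannot happen in a lens space unless the base has at most one cone point, in which case $E(k)$ is a solid torus — already handled. Otherwise the Seifert fibration of $E(k)$ extends over the filling solid torus, exhibiting $M$ as a Seifert fibered space over $S^2$ with at most three exceptional fibers; since $M$ is a lens space (finite cyclic $\pi_1$), this forces at most two exceptional fibers, and the exceptional fiber structure of $E(k)$ then shows $k$ is isotopic onto the Heegaard torus separating two of the exceptional solid tori, i.e.\ $k$ is a torus knot. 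One must also handle the degenerate sub-cases where $E(k)$ itself is $T^2\times I$ or a twisted $I$--bundle over the Klein bottle, which are excluded (respectively) by $\partial E(k)$ being connected and by the ambient hypothesis — or treated directly as torus knots/unknots. Assembling these cases gives the trichotomy; I would present the Seifert case carefully and treat the rest briskly, since boundary-compression and the toroidal case are immediate from the definitions and Lemma~\ref{lem:boundary_reducible}.
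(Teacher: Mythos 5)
Your proposal is correct and follows essentially the same route as the paper: geometrization of the Haken manifold $E(k)$ gives the Seifert/toroidal/hyperbolic trichotomy, and the real work is the atoroidal Seifert case, which both you and the paper settle by pinning down the base orbifold as a disk with at most two cone points (the paper does this by extending the fibration over the filling via Heil and quoting Seifert's classification of fibered lens spaces, while you constrain the base of $E(k)$ directly and then fill; the difference is cosmetic). One caution: in your final degenerate case, the twisted $I$--bundle over the Klein bottle is \emph{not} excluded by any ambient hypothesis here --- Proposition~\ref{prop:knots} carries no Klein bottle assumption --- so you must use your fallback option: that manifold admits a second Seifert fibration over the disk with two cone points, whence $k$ is still a torus knot, exactly as in the paper.
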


\begin{proof}
By Thurston's Hyperbolization Theorem for Haken 3-manifolds, $E(k)$ is Seifert, toroidal or hyperbolic. Suppose that $k$ is neither a satellite knot nor a hyperbolic knot. Then $E(k)$ is an atoroidal Seifert manifold. We show that $k$ is a torus knot. 

Fix some Seifert fibration for $E(k)$. Let $F$ denote the base orbifold. By \cite{heil:elementary}, the Seifert fibration for $E(k)$ extends to one on $M=E(k) \cup N(k)$ in the natural way since $M$ is irreducible. Let $\hat{F}$ be the base orbifold of this extended Seifert fibration on $M$. 

By \cite[Satz 9]{seifert:topologie}, $\hat{F}$ must be either

\begin{itemize}
\item the $2$--sphere with at most two cone points, or
\item the projective plane with at most one cone point.
\end{itemize}

Hence $F$ must be either 

\begin{itemize}
\item the disk with at most two cone points, or
\item the M\"obius band with at most one cone point.
\end{itemize}

If $F$ is a disk with at most two cone points, then $k$ is a torus knot. If the underlying surface of $F$ is a M\"obius band, then $F$ must have no cone points, else $E(k)$ would contain an essential torus. If $F$ is a M\"obius band, then $E(k)$ is a circle bundle over the M\"obius band, and hence homeomorphic to the twisted $I$--bundle over the Klein bottle. Thus $E(k)$ admits another Seifert fibration with base orbifold a disk with two cone points; see \cite[VI.5.(d)]{jaco:lectures}. Thus $k$ is a torus knot. 
\end{proof}

\begin{remark} \label{remark:seifert_toroidal}
$E(k)$ could be simultaneously Seifert and toroidal if $E(k)$ admits a Seifert fibration over the M\"obius band with one exceptional fiber. Consequently, $E(k)$ will contain an embedded Klein bottle.
\end{remark} 

\begin{cor} \label{cor:haken}
Let $k$ be a nonsimple tunnel number one knot in a lens space $M$. Then $k$ is either a trivial knot, a torus knot, or a satellite knot.
\end{cor}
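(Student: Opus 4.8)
The plan is to derive this corollary directly from the two propositions of this section, with the trivial knot handled separately. First I would observe that if $k$ is the trivial knot there is nothing to prove, so I may assume $k$ is nontrivial. Since $k$ is then a nontrivial tunnel number one knot in a lens space, Proposition~\ref{prop:haken} applies and tells us that $E(k)$ is irreducible, hence Haken. Now Proposition~\ref{prop:knots} applies (its only hypothesis is that $E(k)$ be irreducible) and yields that $k$ is a torus knot, a satellite knot, or a hyperbolic knot.

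It then remains only to eliminate the hyperbolic possibility, and this is exactly where the hypothesis that $k$ is nonsimple is used. If $k$ were a hyperbolic knot, then $M-k$ would carry a complete hyperbolic structure of finite volume; such a manifold is irreducible, atoroidal, anannular, and has incompressible boundary, so $E(k)$ would contain no essential sphere, disk, annulus, or torus. That contradicts nonsimplicity of $k$. Hence $k$ is a torus knot or a satellite knot, and together with the trivial case this is precisely the asserted trichotomy.

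The substantive inputs are Propositions~\ref{prop:haken} and~\ref{prop:knots}; given those, the corollary is essentially bookkeeping, and the only point I would be careful to state cleanly is the standard fact that a hyperbolic knot is simple. If one prefers to avoid invoking hyperbolic geometry in the last step, an alternative is available: since $k$ is nonsimple and $E(k)$ is irreducible, $E(k)$ contains an essential disk, annulus, or torus; an essential disk forces $E(k)\cong\S^1\times D^2$ by Lemma~\ref{lem:boundary_reducible} (the other alternative $M\#(\S^1\times D^2)$ being reducible), so $k$ is unknotted and hence a torus knot; an essential torus makes $k$ a satellite knot by definition; and in the remaining case Proposition~\ref{prop:knots} leaves only the torus- or satellite-knot options (a hyperbolic exterior being anannular). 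I would most likely present the first route, as it is shorter, and mention the second only if a referee wants the hyperbolicity appeal unpacked.
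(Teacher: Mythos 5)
Your proof is correct and follows exactly the route the paper intends: the corollary is stated without proof as an immediate consequence of Propositions~\ref{prop:haken} and~\ref{prop:knots}, with the hyperbolic case excluded because hyperbolic knots are simple. Your careful handling of the trivial knot and the explicit elimination of the hyperbolic case only make explicit what the paper leaves implicit.
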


\section{Satellite Knots} \label{sec:satellites}
We show that the Main Theorem holds under the assumption that $k$ is a satellite knot, that is, $E(k)$ contains an essential torus. These two cases are dealt with in Propositions~\ref{prop:anannular}~and~\ref{prop:annular}. 

\begin{prop} \label{prop:anannular}
Let $k$ be a tunnel number one satellite knot in a lens space $M$. Suppose that $E(k)$ contains no essential annulus. Then $k$ is a $(1,1)$ knot. 
\end{prop}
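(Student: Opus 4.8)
The plan is to use the satellite structure to reduce to the standard picture of a knot essentially embedded in a solid torus with incompressible companion torus, and then to show that the hypothesis ``$E(k)$ anannular'' is incompatible with $E(k)$ carrying a genus-two Heegaard splitting; in effect no such $k$ exists, so the conclusion holds (and in any residual degenerate sub-case one reads off a $(1,1)$ decomposition via Lemma~\ref{lem:1-bridge}).

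Fix an essential torus $T \subset E(k)$. By Proposition~\ref{prop:haken}, $E(k)$ is irreducible and Haken, and by Lemma~\ref{lem:boundary_reducible} (since $k$, being a satellite, is nontrivial) $\d E(k)$ is incompressible. As a lens space is atoroidal, $T$ compresses in $M$; any compressing disk must lie on the side $V$ of $T$ containing $N(k)$, since on the other side it would, by incompressibility of $T$ in $E(k)$, give a compressing disk inside $E(k)$. Thus $V$ has compressible boundary, and since $V$ sits in the irreducible manifold $M$, the argument of Lemma~\ref{lem:boundary_reducible} shows $V$ is a solid torus; incompressibility of $T$ in $E(k)$ then forces $k$ to be essential in $V$, i.e.\ not contained in a ball and not isotopic to the core. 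Setting $Y := \cl{M - V}$ and $P := V - N(k)$ gives $E(k) = P \cup_T Y$ with $T$ incompressible in each piece, so $Y$ is not a solid torus, $P$ is not $T^2 \times I$, and $P$ is anannular as a submanifold of $E(k)$.

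Now invoke tunnel number one: realize $E(k) = H_\a$ as in Proposition~\ref{prop:haken}, so that the co-core arc of the $2$--handle is an unknotting tunnel $\tau$. Since $T$ is a closed incompressible surface it does not lie in the handlebody $H$, so it meets $\tau$; minimize $|T \cap \tau|$, which is then positive and, as both endpoints of $\tau$ lie in $\d P$, even. The crux — and the step I expect to be the main obstacle — is to show that the anannularity of $E(k)$, exploited through Jaco's handle-addition theorem \cite{jaco:adding} and the $2$--handle-addition and toroidal-filling results of Eudave-Mu\~noz and Gordon applied to $H_\a$, forces $|T \cap \tau| = 2$ and pins down the incompressible pieces $T \cap H$ and $T \cap P$; the resulting structure exhibits $Y$ as a tunnel number one companion exterior and $P$ as the exterior of a $1$--bridge knot in $V$, and then a Heegaard-genus count (each of $P$ and $Y$ has Heegaard genus at least $2$, so their amalgamation along the incompressible torus $T$ has genus at least $3$) contradicts $E(k)$ having genus two — proving the proposition, with the $(1,1)$ conclusion drawn directly via Lemma~\ref{lem:1-bridge} in any sub-case where the pieces degenerate and the count fails. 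Finally one must handle the possibility that $T$ is nonseparating in $E(k)$; here I would use that $H_2(E(k);\Q) = 0$ and that $M$ contains no embedded Klein bottle to rule it out.
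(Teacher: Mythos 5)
Your overall strategy is not viable: you aim to show that the hypotheses are contradictory, so that the proposition holds vacuously. But the class of knots in question is nonempty --- tunnel number one satellite knots with anannular exterior exist already in $\S^3$ (this is exactly the class analyzed by Morimoto--Sakuma and by Eudave-Mu\~noz in \cite{ms:unknotting}, \cite{eudave:nonsimple}), and the proposition asserts a genuine structural conclusion about them, not their nonexistence. The specific step that fails is your Heegaard-genus count: there is no general lower bound of the form $g(P \cup_T Y) \ge g(P) + g(Y) - 1$ for an amalgamation along an incompressible torus, and the tunnel number one satellite knots are precisely the counterexamples (both pieces have genus two, yet the union has genus two). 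Relatedly, the step you yourself flag as ``the crux'' --- pinning down $|T \cap \tau| = 2$ and the pieces $T \cap H$, $T \cap P$ --- is left entirely unproved, and even if carried out it would lead to a structure theorem, not a contradiction.

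For comparison, the paper's proof takes the opposite tack: it invokes \cite[Theorem 2~(c)]{eudave:nonsimple} (which is exactly the handle-addition analysis you gesture at, already done) to conclude that the tunnel $\tau$ slides to $\tau_1 \cup \tau_2$ with $\tau_1$ a simple closed curve, and that $E(k) = Y' \cup_T Y''$ where $Y'$ is a small Seifert piece (two solid tori glued along an annulus) containing $\tau_1$ as a core of one of the solid tori. It then shows $Y'' \cup N(k)$ is a solid torus, so $M$ is a Dehn filling of $Y'$; the Seifert fibration of $Y'$ extends over $M$, exhibiting $\d V_1$ as a Heegaard torus and hence $\tau_1$ as an unknotted curve, at which point Lemma~\ref{lem:1-bridge} produces the $(1,1)$ decomposition. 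Your opening reductions (incompressibility of $\d E(k)$, locating the compressing disk of $T$ on the side containing $N(k)$, and ruling out a nonseparating $T$ via $H_2(E(k)) = 0$) are fine, but the argument needs to be rebuilt from the structure theorem forward rather than toward a contradiction.
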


\begin{proof}
Under this hypothesis, the knot $k$ is nontrivial and knotted. Then our proof of Proposition~\ref{prop:haken} shows that \cite[Theorem 2 (c)]{eudave:nonsimple} applies and provides the following description: Let $\tau$ be an unknotting tunnel for $E(k)$. $E(k)$ has an essential torus $T$ such that the tunnel $\tau$ can be slid over itself to obtain $\tau_1 \cup \tau_2$ where $\tau_1$ is a simple closed curve, $\tau_2$ is an arc, and $T$ meets $\tau$ in a point in $\tau_2$. Futhermore, $E(k)$ can be decomposed into $E(k)=Y' \cup_T Y''$, where $Y'$ is obtained by identifying two solid tori $V_1$, $V_2$ along an annulus $A$, $\tau_1$ is a core of $V_1$, $V_1 \cap \tau_2$ is a straight arc joining $\d V_1 - A$ and $\tau_1$, and $V_2 \cap \tau = \vempty$. 

When we form the lens space $M=E(k) \cup N(k)=Y' \cup_T (Y'' \cup N(k))$, the torus $T$ will be compressible in $V=Y'' \cup N(k)$ in $M$; we show that $V \cong \S^1 \times D^2$. Let $D$ be a compressing disk for $T$. Then $T$ compresses to a 2-sphere $S$. Then $S$ is separating. Since $M$ is prime, $S$ must bound a 3-ball on one side. The side of $S$ containing the disk $D$ is the manifold obtained by attaching a 2-handle (a collar of $D$) to $Y'$. This 3-manifold is homeomorphic to either a punctured lens space (if $\d D$ is not a fiber of the Seifert-fibered space $Y'$) or the connected sum of two lens spaces (if $\d D$ is a fiber of the Seifert-fibered space $Y'$); the latter scenario is impossible since $M$ is prime. Thus $S$ bounds a 3-ball disjoint from $D$. Therefore, $V \cong \S^1 \times D^2$. 

Since $V \cong \S^1 \times D^2$, $M$ is obtained as a Dehn filling on the Seifert-fibered space $Y'$. We extend the Seifert fibration of $Y'$ to one on $M$. Note that $Y'$ has a Seifert fibration over the disk with at most two exceptional fibers. We now see that $\d V_1$ is a Heegaard torus for $M$, hence $\tau_1$ is an unknotted simple closed curve in $M$. It follows from Lemma~\ref{lem:1-bridge} that $k$ is a $(1,1)$ knot.
\end{proof}

\begin{prop} \label{prop:annular}
Let $k$ be a tunnel number one knot in a lens space $M$, where $M$ contains no embedded Klein bottles. Suppose that $E(k)$ contains an essential torus and an essential annulus. If $k$ is prime, then $k$ is a $(1,1)$ knot.
\end{prop}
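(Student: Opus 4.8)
The plan is to prove that $k$ is a cable knot, then exploit the resulting graph‑manifold structure on $E(k)$ against the tunnel‑number‑one hypothesis via Schultens' classification of Heegaard splittings of graph manifolds, finally reducing to Lemma~\ref{lem:1-bridge}.

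\textbf{Step 1 (reduce to cable knots).} By Corollary~\ref{cor:haken}, $k$ is trivial, a torus knot, or a satellite knot; the exteriors of trivial and of torus knots are atoroidal, so the essential torus forces $k$ to be a satellite. Since $k$ is prime the essential annulus $A$ in $E(k)$ is non‑meridional, and $k$ is nontrivial (its exterior is toroidal), so $E(k)$ is irreducible by Proposition~\ref{prop:haken} and $\d$‑irreducible by Lemma~\ref{lem:boundary_reducible} (the two alternatives of that lemma would make $k$ trivial or $E(k)$ reducible, incompatible with the essential torus). Cutting $E(k)$ along $A$ and using irreducibility and $\d$‑irreducibility — essentially the annulus‑plus‑Dehn‑filling analysis of Eudave‑Mu\~noz and Gordon — shows that $E(k)$ is Seifert fibered or that $k$ is a cable knot. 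The first possibility is excluded: a Seifert fibered, toroidal $E(k)$ contains an embedded Klein bottle by Remark~\ref{remark:seifert_toroidal}, contradicting the hypothesis on $M \supseteq E(k)$. Hence $k$ is a $(p,q)$‑cable of some knot $k_0$ with $|p|\ge 2$; the same two lemmas show $E(k_0)$ is irreducible and $\d$‑irreducible; and $E(k)=E(k_0)\cup_T C$, where $T=\d N(k_0)$ is an essential torus and $C$ is the cable space, Seifert fibered over the annulus with one exceptional fiber of order $|p|$. In particular the Heegaard genus of $C$ is $2$, while that of $E(k_0)$ is at least $2$ since $E(k_0)$ is not a solid torus.

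\textbf{Step 2 (show $E(k_0)$ is Seifert fibered).} Let $\Sigma$ be a genus‑$2$ Heegaard surface for $E(k)$; it is of minimal genus, hence irreducible. Suppose $E(k_0)$ is not Seifert fibered. Then $T$ is isotopic into a regular neighborhood of a torus of the JSJ decomposition of $E(k)$ separating the Seifert piece $C$ from a non‑Seifert piece of $E(k_0)$. Untelescoping $\Sigma$ in the sense of Scharlemann--Thompson and putting $T$ in good position with respect to the thick and thin levels — using that $C$ is Seifert fibered while the adjacent piece of $E(k_0)$ admits no horizontal structure meeting $T$ — one shows that, after isotopy, $\Sigma$ is an amalgamation along $T$ of a Heegaard splitting of $E(k_0)$ of genus at least $2$ with one of $C$ of genus at least $2$. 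But an amalgamation along a torus has genus at least $2+2-1=3>2$, a contradiction. Hence $E(k_0)$ is Seifert fibered, and the argument of Proposition~\ref{prop:knots} applies (the fibration extends over $N(k_0)$, and the base orbifold of the extended fibration on the lens space $M$ is $\S^2$ with at most two cone points, since $M$ contains no Klein bottle): $E(k_0)$ is Seifert fibered over the disk with at most two exceptional fibers, i.e.\ $k_0$ is a torus knot. Thus $E(k)=E(k_0)\cup_T C$ is a graph manifold with two Seifert‑fibered pieces.

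\textbf{Step 3 (conclude).} Now I would invoke Schultens' structure theorem for Heegaard splittings of graph manifolds: every such splitting is assembled from vertical and horizontal splittings of the Seifert pieces by amalgamations and flips, and enumerating those of genus $2$ on $E(k)$ leaves only a short list of possibilities for $(k_0,(p,q))$ together with a normal form for $\Sigma$. In each surviving case the dual unknotting tunnel $\tau$ for $k$ can be slid over itself into the shape $\tau_1\cup\tau_2$ of Lemma~\ref{lem:1-bridge}, with $\tau_1$ an unknotted simple closed curve and $\tau_2$ an arc joining $\tau_1$ to $k$ — equivalently, the Seifert structures let one isotope $k$ onto a Heegaard torus of $M$ in $1$‑bridge‑braid position. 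Lemma~\ref{lem:1-bridge} then yields that $k$ is a $(1,1)$ knot.

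\textbf{Main obstacle.} I expect Step 2 to be the hard part: turning ``the genus‑$2$ splitting cannot amalgamate along $T$'' into ``$E(k_0)$ is Seifert fibered'' requires a careful thin‑position argument together with Schultens‑type rigidity for Heegaard surfaces meeting an essential torus, and the genus bookkeeping — ruling out efficiency gained from horizontal splittings of the Seifert piece $C$, and checking that $T$ can be isotoped compatibly with the untelescoped splitting — must be handled with care. The case analysis in Step 3, though tedious, should be routine once the explicit graph‑manifold structure is available.
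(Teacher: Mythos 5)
Your overall strategy (cable knot $\Rightarrow$ cabled on a torus knot $\Rightarrow$ graph-manifold structure $\Rightarrow$ Schultens $\Rightarrow$ Lemma~\ref{lem:1-bridge}) matches the paper's, but your Step 2 contains a genuine gap, and it is exactly where the paper does something different and much simpler. You assert that if $E(k_0)$ is not Seifert fibered then the genus-$2$ Heegaard surface $\Sigma$ of $E(k)=E(k_0)\cup_T C$ can, after untelescoping, be realized as an amalgamation along $T$, forcing genus at least $2+2-1=3$. But a genus-$2$ splitting of the irreducible, toroidal manifold $E(k)$ is strongly irreducible (this is how the paper sets up its Schultens argument), and a strongly irreducible splitting does not untelescope and is never an amalgamation along an essential torus; what it can do is meet $T$ in curves essential on both surfaces and sit ``horizontally'' in one of the pieces. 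Ruling that out when one piece is hyperbolic is not covered by Schultens' graph-manifold results, and you supply no substitute --- you flag it yourself as the main obstacle. The paper avoids the issue entirely with a surgery argument: since $k$ is cabled, filling $E(k)$ along the cabling slope gives $k(pq)=k_0(p/q)\# L(q,p)$ with $q\ge 2$; tunnel number one bounds the Heegaard genus of $k(pq)$ by $2$, so by additivity of Heegaard genus $k_0(p/q)$ has cyclic fundamental group, as does $k_0(1/0)=M$, and since $\Delta(1/0,p/q)=q\ge 2$ the Cyclic Surgery Theorem of \cite{cgls:dehn} (together with Proposition~\ref{prop:haken}) forces $E(k_0)$ to be Seifert fibered. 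That short argument is the replacement your Step 2 needs.

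Two further points. In Step 1, ``cutting along $A$ shows $E(k)$ is Seifert or cabled'' is not automatic for knots in lens spaces: the closed surface $A\cup A_1$ built from $A$ and an annulus $A_1\subset\d E(k)$ could a priori be a Klein bottle, and the paper must invoke the no-Klein-bottle hypothesis together with Eudave-Mu\~noz's structure theorem for $\a$-essential annuli in the genus-$2$ handlebody (i.e.\ the tunnel-number-one structure, not just irreducibility and $\d$-irreducibility) to produce the solid torus on one side of $A$. In Step 3, the ``short list'' conceals the actual content: one must first show that the active component of the splitting is the edge manifold $T\times I$, which requires excluding a pseudo-horizontal surface in the Seifert piece over the disk with two exceptional fibers via an Euler characteristic count whose only solution is $q_1=q_2=2$ --- excluded, once again, only by the no-Klein-bottle hypothesis (Remark~\ref{remark:seifert_toroidal}). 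Neither of these is routine bookkeeping, so as written the proposal establishes the right outline but not the proposition.
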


Under this hypothesis, we will show that $E(k)$ is \textit{cabled on a torus knot exterior}, then use this structure of $E(k)$ and the induced genus 2 Heegaard splitting coming from a tunnel to obtain a $(1,1)$ decomposition for $E(k)$. 

Let $C_{p,q}$ $(q >1)$ denote the space obtained by removing a regular fiber from a solid torus $V$ with a $(p,q)$--fibering. We call $C_{p,q}$ a \textbf{cable space of type $\bs{(p,q)}$}. Let $T$ and $T'$ denote the boundary tori of $C_{p,q}$. There is a standard way to embed $C_{p,q}$ in $\S^3$ so that a regular fiber of is a $(p,q)$--torus knot and $T$ is a Heegaard torus. Now it is easy to see that there are slope co-ordinates for $T$ and $T'$ so that a regular fiber of the $(p,q)$--fibering on $C_{p,q}$ has slope $p/q$ on $T$, and slope $pq$ on $T'$; we call these slope co-ordinates the \textbf{standard slope co-ordinates}; see \cite[Section 7]{gordon:dehn_surgery}.

We will call a 3-manifold $X$ \textbf{cabled (on $\bs{Y}$)} if $X=Y \cup_T C_{p,q}$ where $T=Y \cap C_{p,q} \subset \d C_{p,q}$ is a (possibly compressible) torus in $Y$, and $T'$ is the other boundary component of $C_{p,q}$; we will typically have $\d X =T'$. 

\begin{remark} \label{remark:cable}
In the case of knot exteriors, if $k$ is a knot in a closed orientable 3-manifold so that $k$ is cabled on a knot $k_0$, then $E(k)$ is cabled on $E(k_0)$. So $E(k)=E(k_0) \cup_{T} C_{p,q}$ for some $q \ge 2$ with $T= \d E(k_0)$. To get the standard slope co-ordinates on $\d C_{p,q}$, we let $\mu,\lambda$ be a meridian-longitude pair for $N(k_0)$ and let $\mu', \lambda'$ be an appropriate meridian-longitude pair (choice of $\lambda'$ depends on $\lambda$) for $N(k)$. Then a slope on $\d N(k)$ has co-ordinates $m/l$ where $m,l \in \Z$ and $\gcd(m,l)=1$ if some oriented simple closed curve representing the slope goes homologically $m$-times in the $\mu$ direction and $l$-times in the $\lambda$ direction. We similarly have slope co-ordinates on $\d N(k_0)$ with respect to $\mu'$ and $\lambda'$. With these choices of co-ordinates, we see that $k(pq)=k_0(p/q) \# L(q,p)$; see \cite[Lemma 7.2, Corollary 7.3]{gordon:dehn_surgery}.
\end{remark}

\begin{lem} \label{lem:cable}
Let $k$ be a tunnel number one satellite knot in a lens space $M$, where $M$ contains no embedded Klein bottles. Assume that $E(k)$ contains an essential nonmeridional annulus. Then $k$ is cabled. 
\end{lem}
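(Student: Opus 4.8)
The plan is to put $E(k)$ into a form where the structure theory of essential annuli applies, extract a cable space at $\d E(k)$, and then convert this into a cabling of the knot.

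First I would normalize. A satellite knot is nontrivial, so $E(k)$ is irreducible by Proposition~\ref{prop:haken}; and $\d E(k)$ is incompressible, for otherwise Lemma~\ref{lem:boundary_reducible} forces $E(k)\cong\S^1\times D^2$ or $E(k)\cong M\#(\S^1\times D^2)$, and a short argument (push an incompressible torus off the connected--sum sphere and use that a lens space is atoroidal) shows that neither of these is toroidal, contradicting that $k$ is a satellite. Thus $E(k)$ is an irreducible, boundary--incompressible Haken manifold with a single torus boundary, and it contains an essential annulus $A$ whose boundary slope $\g$ on $\d E(k)$ is not the meridian $\mu$ of $k$.

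Next I would locate a cable space. Pass to the characteristic submanifold of $E(k)$ (nonempty since $E(k)$ is toroidal) and isotope $A$ into it; the component $\Sigma$ that contains $A$ must meet $\d E(k)$, since $\d A\subset\d E(k)$, and such a component is Seifert fibered with $A$ a vertical annulus, so $\g$ is the fibre slope of $\Sigma$ along $\d E(k)$. I claim $\Sigma$ is a cable space $C_{p,q}$ with $q\ge 2$ having $\d E(k)$ as one of its two boundary tori; granting this, $E(k)=\Sigma\cup_{T}Y$ is cabled on $Y:=\cl{E(k)-\Sigma}$ in the sense of the definition above. To prove the claim one eliminates the other possibilities for $\Sigma$. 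The case $\Sigma=E(k)$ cannot occur: $E(k)$ would be Seifert and toroidal, so by the argument of Proposition~\ref{prop:knots} its base orbifold would be a M\"obius band with at most one exceptional fibre, whence $E(k)$ -- and therefore $M$ -- would contain an embedded Klein bottle by Remark~\ref{remark:seifert_toroidal} (or, with no exceptional fibre, because $E(k)$ is then the twisted $I$--bundle over the Klein bottle), contrary to hypothesis. So $\Sigma\subsetneq E(k)$ and $\Sigma$ has a second boundary torus $T_0\ne\d E(k)$, which is a JSJ torus of $E(k)$. Since $\d E(k)$ is incompressible, $\Sigma$ is not a solid torus, and since $\Sigma$ contains $A$ it is not a collar $T^{2}\times I$. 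Here the non--meridional hypothesis enters: because $\mu\neq\g$, filling $\d E(k)$ along $\mu$ is not a filling along a fibre of $\Sigma$, so $\Sigma\cup N(k)$ is again a Seifert fibered space; and if $\Sigma$ is not a cable space then $\Sigma\cup N(k)$ is not a solid torus, hence has incompressible boundary, so $T_0$ -- being incompressible in $\Sigma\cup N(k)$ and in the (unchanged) complementary JSJ pieces -- is an essential torus in $M$, which is impossible since $M$ is an atoroidal lens space. This leaves $\Sigma=C_{p,q}$ with $q\ge 2$, proving the claim.

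Finally I would upgrade ``$E(k)$ cabled on $Y$'' to ``$k$ cabled on a knot''. Write $M=Y\cup_{T}V$ with $V:=C_{p,q}\cup_{T'}N(k)$ and $T'=\d E(k)$, so $V$ is a Dehn filling of one boundary torus of the cable space. Using once more that $M$ is irreducible, atoroidal and contains no embedded Klein bottle, and that $k$ is a satellite knot (so $E(k)$ is neither a solid torus nor a torus--knot exterior), I would rule out the outcomes in which $V$ is reducible, a torus--knot exterior, or a twisted $I$--bundle over the Klein bottle, leaving $V$ a solid torus. Its core $k_0$ then satisfies $Y\cong E(k_0)$; the fibration of $C_{p,q}$ extends over $V$ (again since $\mu\neq\g$), and $k$ is a fibre of $V$ that is not its core (the core would make $C_{p,q}=V-N(k)$ equal to $T^{2}\times I$, contradicting $q\ge 2$), so $k$ is a regular fibre of $V$, i.e.\ a $(q,s)$--curve on $\d N(k_0)$ with $q\ge 2$. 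Hence $k$ is a cable of $k_0$ (compare Remark~\ref{remark:cable}); in particular $k$ is cabled.

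The step I expect to be the main obstacle is the middle one: pinning down the characteristic Seifert piece $\Sigma$ as a cable space and checking that $\d E(k)$ appears correctly among its boundary tori. This is where one must combine the non--meridionality of $\g$ with the fact that $M$ is an irreducible, atoroidal lens space containing no embedded Klein bottle, in order to exclude the ``composing--region'' possibilities for $\Sigma$ (an $\S^1$--bundle over a surface with several boundary circles or positive genus) and the degenerate fibre fillings.
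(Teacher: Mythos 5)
Your argument is essentially correct, but it takes a genuinely different route from the paper's. The paper exploits the tunnel number one hypothesis directly: it writes $E(k)=H_\a$ as a $2$--handle addition to a genus $2$ handlebody and invokes Eudave-Mu\~noz's structure theorem for annular $2$--handle additions, which either hands over an $\a$--essential annulus in $H$ (whence, by Kobayashi's theorem that essential annuli in handlebodies separate them into handlebodies, the piece of $E(k)$ missed by the $2$--handle is a solid torus) or directly an essential annulus cutting off a solid torus; the no--Klein-bottle hypothesis enters only to guarantee $A\cup A_1$ is a separating torus. You instead use only that $E(k)$ is irreducible (which is where tunnel number one enters, via Proposition~\ref{prop:haken}) and run the standard JSJ/Dehn-filling analysis: identify the Seifert piece at $\d E(k)$, rule out the composing-region and degenerate cases by filling along the meridian and citing atoroidality of $M$, and conclude that the boundary piece is a cable space whose meridional filling is a solid torus. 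What your approach buys is generality and independence from the handle-addition machinery -- it would prove the statement for any knot with irreducible exterior satisfying the hypotheses -- at the cost of invoking the characteristic submanifold theory; the paper's route is shorter because Eudave-Mu\~noz's theorem delivers the solid torus cut off by $A$ almost immediately.

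One step of yours is stated too loosely, and you correctly flag it as the main obstacle: the assertion that the component $\Sigma$ of the characteristic submanifold containing $A$ ``is Seifert fibered with $A$ a vertical annulus'' and has $\d E(k)$ as an entire boundary torus. In the Jaco--Shalen--Johannson characteristic submanifold, the component containing an essential annulus may be an $I$--bundle, or may meet $\d E(k)$ only in annuli, in which case your subsequent steps (``$\Sigma$ has a second boundary torus $T_0$'', ``$\Sigma\cup N(k)$ is a Dehn filling of $\Sigma$'') do not literally apply. The fix is standard but should be said: work with the torus JSJ decomposition, isotope $A$ to minimize its intersection with the decomposing tori, observe that the piece $P$ containing $\d E(k)$ then contains an essential annulus with at least one boundary component on $\d E(k)$ (an outermost sub-annulus of $A$ cannot be $\d$--parallel in $P$ since its two boundary circles lie on distinct components of $\d P$), conclude $P$ is Seifert fibered because hyperbolic pieces are anannular, and note that a horizontal essential annulus would force $P$ to be $T^2\times I$ or the twisted $I$--bundle over the Klein bottle, both excluded. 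With $\Sigma:=P$ so repaired, the rest of your argument goes through, including the final observation that $\Delta(\mu,\g)=1$ is forced (else $C_{p,q}\cup N(k)$ is a Seifert fibered space over the disk with two exceptional fibers and $T_0$ becomes an essential torus in the lens space $M$), which is exactly what makes $k$ a regular fibre of the filled solid torus and hence a genuine cable.
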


\begin{proof} 
Assume the hypotheses. Also, we may as well assume that $E(k)$ is not a Seifert manifold by Remark~\ref{remark:seifert_toroidal}. Since $k$ has tunnel number one, $E(k)$ can be realized as $H_\a$, the 3-manifold obtained by attaching a 2-handle to a genus 2 handlebody $H$ along a nonseparating simple closed curve $\a \subset \d H$. We apply \cite[Theorem 1]{eudave:nonsimple}. Suppose that part (a) of this theorem holds. Then $H$ contains an $\a$--essential annulus $A$; furthermore $A$ will persist as an essential nonmeridional annulus in $H_\a=E(k)$ by \cite[Remark (d)]{eudave:nonsimple}; in particular, $A$ misses the 2-handle. Then $A$ cuts $\d E(k)$ into two annuli $A_1$ and $A_2$. Let $S=A \cup A_1$ in $E(k)$. 

Since $M$ contains no embedded Klein bottles, $S$ must be a separating torus. So $A$ separates $E(k)$ into 3-manifolds $N_1$ and $N_2$. Since $E(k)$ has torus boundary, $\d N_1$ and $\d N_2$ must be tori. Since $A$ is $\a$--essential, the 2-handle misses one of the $N_i$, say, $N_1$. Now $A$ separates $H$ into $N_1$ and $\cl{N_2 - N(\tau)}$ where $\tau$ is the co-core of the 2-handle attachment for $E(k)$. Since $A$ is essential in $H$, it must separate $H$ into handlebodies by \cite{kobayashi:structures}. Hence $N_1$ is a solid torus. Let $k_0$ denote the core of $N_1$. Since $A$ is essential in $E(k)$, $\d A$ are not longitudes of $k_0$; otherwise, $A$ would be parallel to $\d E(k)$ through $N_1 \subset E(k)$. It follows that $k$ is cabled on $k_0$.

Now suppose that part (b) of \cite[Theorem 1]{eudave:nonsimple} holds. Then there is an essential annulus $A$ of $E(k)$ that cuts $E(k)$ into $N_1$ and $N_2$, where $N_1$ is a solid torus. As in the end of the previous paragraph, it follows that $k$ is cabled on the core $k_0$ of $N_1$.
\end{proof}

\begin{lem} \label{lem:cable_torus} 
Let $k$ be a nontrivial tunnel number one cable knot in a lens space $M$, where $M$ contains no Klein bottles. Then $k$ is cabled on a torus knot.
\end{lem}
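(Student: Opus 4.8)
The plan is to exploit the reducible (``cabling'') Dehn filling on $E(k)$: because $k$ has tunnel number one this filling has Heegaard genus at most two, which forces the corresponding filling of the companion to be a lens space, after which the Cyclic Surgery Theorem collapses the companion exterior to a Seifert fibered space, and then the argument of Proposition~\ref{prop:knots} finishes the job. Concretely, by Lemma~\ref{lem:cable}, $k$ is cabled on a knot $k_0$; fixing standard slope coordinates as in Remark~\ref{remark:cable} we may write $E(k)=E(k_0)\cup_T C_{p,q}$ with $q\ge 2$, $T=\d E(k_0)$, and
\[
k(pq)=k_0(p/q)\,\#\,L(q,p),
\]
where $pq$ denotes a slope on $\d E(k)$. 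First I would dispose of the case that $\d E(k_0)$ is compressible: by Lemma~\ref{lem:boundary_reducible} applied to $k_0\subset M$ we would get $E(k_0)\cong\S^1\times D^2$ or $E(k_0)\cong M\#(\S^1\times D^2)$, and the latter is impossible since it would make $E(k)$ reducible, contradicting Proposition~\ref{prop:haken}; while $E(k_0)\cong\S^1\times D^2$ means $k_0$ is unknotted and hence a torus knot, and we are done. So assume $E(k_0)$ is irreducible with incompressible torus boundary.

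Next I would bound the Heegaard genus of $k(pq)$. Since $k$ has tunnel number one, $E(k)=H_\a$ for a genus two handlebody $H$, and filling $\d E(k)$ along $pq$ merely attaches one further $2$--handle and a $3$--handle; thus $k(pq)$ admits a handle decomposition with two $1$--handles, two $2$--handles and one $3$--handle, so its Heegaard genus is at most $2$. By Haken's additivity of Heegaard genus under connected sum, and since $L(q,p)$ has Heegaard genus exactly one (as $q\ge 2$), the summand $k_0(p/q)$ has Heegaard genus at most one, so $\pi_1\big(k_0(p/q)\big)$ is cyclic.

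Now $E(k_0)$ is irreducible with incompressible torus boundary and has two Dehn fillings with cyclic fundamental group: the trivial (meridional) filling, which is the lens space $M$, and the filling $k_0(p/q)$ just found, with $\Delta(\mu_{k_0},p/q)=q\ge 2$. By the Cyclic Surgery Theorem of Culler--Gordon--Luecke--Shalen, which applies to any such $E(k_0)$, the exterior $E(k_0)$ must be Seifert fibered. From here the argument is identical to that of Proposition~\ref{prop:knots}: the Seifert fibration of $E(k_0)$ extends over $N(k_0)$ to a Seifert fibration of $M$ (the meridian is not the fiber slope, else $M$ would be reducible), whose base orbifold is $\S^2$ with at most two cone points or $\mathbb{RP}^2$ with at most one; hence the base orbifold of $E(k_0)$ is a disk with at most two cone points or a M\"obius band with at most one cone point. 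The M\"obius band possibilities place an embedded Klein bottle in $E(k_0)\subset M$ (via Remark~\ref{remark:seifert_toroidal} when there is a cone point, and via the twisted $I$--bundle description otherwise), which is excluded by hypothesis; so $E(k_0)$ fibers over a disk with at most two cone points and $k_0$ is a torus knot.

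The step I expect to carry the weight is the appeal to the Cyclic Surgery Theorem, via the observation that the cabling slope on $E(k_0)$ lies at distance $q\ge 2$ from the meridian while still producing a cyclic (lens space) filling; this is precisely what rules out $k_0$ being hyperbolic or essentially toroidal and reduces everything to the Seifert case, where the analysis of Proposition~\ref{prop:knots} applies verbatim. The Heegaard genus computation for $k(pq)$ and the disposal of the compressible--boundary case are routine given Proposition~\ref{prop:haken} and Remark~\ref{remark:cable}.
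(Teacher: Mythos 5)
Your proof is correct and follows essentially the same route as the paper: use the cabling slope to write $k(pq)=k_0(p/q)\#L(q,p)$, bound the Heegaard genus of $k(pq)$ by $2$ via tunnel number one, apply additivity of Heegaard genus and the Cyclic Surgery Theorem to force $E(k_0)$ to be Seifert fibered, and then use the no-Klein-bottle hypothesis together with the analysis of Proposition~\ref{prop:knots} to conclude $k_0$ is a torus knot. The only differences are expository: you handle the compressible-boundary and reducibility issues for $E(k_0)$ explicitly and inline the Seifert-fibration argument, where the paper simply cites Propositions~\ref{prop:haken} and~\ref{prop:knots} and Remark~\ref{remark:seifert_toroidal}.
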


\begin{proof}
We merely mimic the proof of the case $M=\S^3$ from \cite[Lemma 4.6]{eudave:nonsimple}, but suitably generalized. Suppose that $k$ is cabled on a knot $k_0$. Then $E(k)=E(k_0) \cup_{T_0} C_{p,q}$ where $q \ge 2$. We use standard slope co-ordinates for $\d C_{p,q}$, as discussed in Remark~\ref{remark:cable}. Thus $k(pq)=k_0(p/q) \# L(q,p)$. By additivity of Heegaard genus under connected sums, we deduce that $k_0(p/q)$ is a lens space or $\S^1 \times \S^2$. Thus $k_0(1/0)$ and $k_0(p/q)$ both have cyclic fundamental groups with $\Delta(1/0,p/q)=q \ge 2$. By the Cyclic Surgery Theorem of \cite{cgls:dehn} and Proposition~\ref{prop:haken}, we conclude that $E(k_0)$ is a Seifert manifold. By Proposition~\ref{prop:knots} and Remark~\ref{remark:seifert_toroidal}, we have that $k_0$ is a torus knot.
\end{proof} 

\begin{proof}[Proof of Proposition~\ref{prop:annular}]
Since $E(k)$ is toroidal and contains no embedded Klein bottles, it cannot be a Seifert manifold; see Remark~\ref{remark:seifert_toroidal}. Let $\tau$ be an unknotting tunnel for $k$. Then $\tau$ induces a natural genus 2 Heegaard splitting for $E(k)$ which persists as a Heegaard surface for $M$ under the trivial Dehn filling of $N(k)$. Let this Heegaard splitting for $E(k)$ be denoted by $E(k)=V \cup_F W$ where $V$ is a compression body with $\d E(k)=\d_-V$, $W$ is a handlebody, and $F=\d_+V=\d_+W$. 

By Lemmas~\ref{lem:cable} and \ref{lem:cable_torus}, $E(k)$ can be expressed as $Y \cup_T C_{p,q}$ where $Y$ is a Seifert fibered space over $D^2$ with two exceptional fibers and $C_{p,q}$ is a cable space. To complete the proof of Proposition~\ref{prop:annular}, we need to analyze how the Heegaard surface $
F$ can intersect the Seifert pieces $Y$ and $C_{p,q}$ in $E(k)$. To this end, we will extensively use results of Schultens \cite{schultens:heegaard}, especially \cite[Theorem 1.1]{schultens:heegaard}  which characterize Heegaard splittings of totally orientable graph manifolds. Once we determine the possible intersections, we will be able to show that $k$ is a $(1,1)$ knot in $M$. Keep in mind that $k$ is not a torus knot, by Remark~\ref{remark:seifert_toroidal}. Therefore, $E(k)$ is not a Seifert manifold.

Consider a neighborhood $T \times I$ of $T$ in $E(k)$. This splits-up $E(k)$ into a totally orientable graph manifold $E(k)=Z' \cup (T \times I) \cup Z''$ where $Z'$ is a Seifert-fibered space over the disk with two exceptional fibers of orders $q_1>1$ and $q_2>1$, and $Z''$ is a Seifert-fibered space over the annulus with one exceptional fiber of order $q>1$. We consider $\d Z'$ as $T \times \{0\}$ and $\d Z'' - \d E(k)$ as $T \times \{1\}$. 

To put this in the context of \cite{schultens:heegaard}, we note that the \textit{vertex manifolds} are $Z'$ and $Z''$, and the \textit{edge manifold} is $T \times I$. The proof of \cite[Proposition 1]{thompson:disjoint} can be extended to manifolds with boundary, so we assume that $F$ is a strongly irreducible Heegaard surface of $E(k)$. By \cite[Lemma 6.1]{schultens:heegaard} there is a vertex or edge manifold $N$ of $E(k)$ and an isotopy of $F$ such that $F \cap \tilde{N}$ is incompressible for any vertex or edge manifold $\tilde{N} \ne N$. In this case, $N$ is called the \textbf{active component} of $F$. Throughout, denote $F \cap Z'$ by $F'$, $F \cap (T \times I)$ by $F_e$ and $F \cap Z''$ by $F''$.  

\begin{claim} \label {claim:active_edge}
We can isotope $T$ in $E(k)$ so that $T \times I$ is the active component.
\end{claim}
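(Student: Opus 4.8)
The plan is to apply Schultens' structure theory to $F$ and then to show that, when one of the two Seifert pieces is forced to be the active component, $F$ can be re--isotoped so that the edge manifold becomes active instead. By \cite[Lemma 6.1]{schultens:heegaard}, applied to the totally orientable graph manifold $E(k)=Z'\cup(T\times I)\cup Z''$ and the strongly irreducible Heegaard surface $F$, there is an active component $N\in\{Z',Z'',T\times I\}$: after isotoping $F$, the surface $F\cap\tilde N$ is incompressible in $\tilde N$ for every vertex or edge manifold $\tilde N\ne N$. If $N=T\times I$ we are done, so suppose $N$ is a vertex manifold, say $N=Z^{\ast}$, and let $Z^{\ast\ast}$ denote the other vertex manifold; then $F\cap(T\times I)$ and $F\cap Z^{\ast\ast}$ are both incompressible. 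It suffices to re--isotope $F$ so that $F\cap Z'$ and $F\cap Z''$ are both incompressible, for then $T\times I$ may be taken to be the active component.

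The first step is to normalize $F$ inside the collar and the inactive vertex. As $F$ is closed, connected and of genus $2$, and is a Heegaard surface for $E(k)$ and not for the proper submanifold $T\times I$, it is not contained in $T\times I$; removing by an isotopy the circles of $F\cap(T\times\{0,1\})$ that are inessential on $T$, the surface $F\cap(T\times I)$ becomes a disjoint union of annuli, incompressible in $T^{2}\times I$, with boundary on $(T\times\{0\})\cup(T\times\{1\})$ and with no closed component. Pushing the $\partial$--parallel ones across the product region, by an ambient isotopy preserving $F$ as a Heegaard surface, we may assume every component of $F\cap(T\times I)$ is a spanning annulus, all carrying a single slope $\g$ on $T$. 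Since $F$ is disjoint from $\partial E(k)=\partial_{-}V$, the incompressible surface $F\cap Z^{\ast\ast}$ has its boundary only on the edge side; the classification of incompressible surfaces in a Seifert fibered space over the disk with two exceptional fibers (a torus knot exterior) and in a cable space then forces $F\cap Z^{\ast\ast}$ to consist of $\partial$--parallel annuli together with at most one essential vertical or horizontal surface, and after pushing the $\partial$--parallel pieces off through the collar the position of $F$ near $T\times I$ and $Z^{\ast\ast}$ is very rigid.

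The crux, and the step I expect to be the main obstacle, is to remove the essential compressions of $F$ that live in the active piece $Z^{\ast}$. Here I would use the refinement in \cite{schultens:heegaard} that $F\cap Z^{\ast}$ may be taken to be pseudo-horizontal or pseudo-vertical relative to the Seifert fibration of $Z^{\ast}$, together with the facts that $F$ is strongly irreducible in $E(k)$ and that the boundary slope $\g$ of $F\cap Z^{\ast}$ on $T$ must be the fibre slope or the horizontal slope of $Z^{\ast}$. The difficulty is that the tubes producing the compressions of a pseudo-horizontal or pseudo-vertical $F\cap Z^{\ast}$ lie in the interior of $Z^{\ast}$, so they cannot be absorbed merely by enlarging a collar of $T$; one must instead exploit that $Z^{\ast}$ is a torus knot exterior or a cable space --- a ``small'' Seifert piece with only finitely many essential surfaces and a completely understood set of incompressible and Heegaard surfaces --- to show, with the aid of the Euler characteristic identity $\chi(F\cap Z')+\chi(F\cap Z'')=-2$ (as $F\cap(T\times I)$ is a union of annuli), that a pseudo-horizontal or pseudo-vertical $F\cap Z^{\ast}$ admitting a compression is incompatible with $F$ being a genus $2$ strongly irreducible Heegaard surface of $E(k)$; hence $F\cap Z^{\ast}$ may in fact be taken incompressible. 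Once that is done, both Seifert pieces meet $F$ incompressibly, so $T\times I$ is the active component, and after an ambient isotopy of $E(k)$ this is realized by the given torus $T$ together with a collar, as required.
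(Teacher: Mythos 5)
Your setup is the same as the paper's: invoke \cite[Lemma 6.1]{schultens:heegaard} to get an active component, and reduce to ruling out the case where a vertex manifold is active. But the proposal stops exactly where the paper's proof begins. The entire content of the claim is the ``crux'' you defer: showing that a pseudo-horizontal (or pseudo-vertical) positioning of $F$ in an active vertex manifold is incompatible with the hypotheses. You state that you ``expect'' this to follow from the smallness of a torus knot exterior or cable space together with an Euler characteristic count, but you do not carry it out, and the identity you propose to use, $\chi(F\cap Z')+\chi(F\cap Z'')=-2$, is not the relevant one. The paper's argument is more delicate: from genus $2$ it extracts a twice-punctured torus component of $F'=F\cap Z'$, drills out the fiber $f$ witnessing pseudo-horizontality, uses parallelism of horizontal surfaces (\cite[VI.25--28]{jaco:lectures}, \cite[Corollary 3.2]{waldhausen:irreducible}) to force $\hat F=\cl{F'-N(f)}$ to be a connected $4$--punctured sphere, and then splits into two cases according to whether $f$ is exceptional or regular. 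The exceptional case dies by the Gordon--Litherland classification of horizontal planar surfaces in cable spaces; the regular case dies by the branched-covering Euler characteristic formula $-2=n\bigl(-2+\tfrac{1}{q_1}+\tfrac{1}{q_2}\bigr)$, whose only solution $n=q_1=q_2=2$ makes $Z'$ the twisted $I$--bundle over the Klein bottle. None of these steps appears in your outline.

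The most concrete symptom of the gap is that your plan never uses the hypothesis that $M$ contains no embedded Klein bottles. That hypothesis is indispensable: without it, the regular-fiber case above actually occurs ($q_1=q_2=2$), the pseudo-horizontal configuration exists, and the claim fails. Any argument that purports to prove the claim without invoking the Klein bottle restriction cannot be completed. Separately, your intermediate goal is slightly misstated: one does not show that $F\cap Z^{\ast}$ ``may be taken incompressible'' by removing compressions one at a time (the compressions of a strongly irreducible Heegaard surface must live somewhere); rather, one shows the vertex-active configuration is itself impossible, so that Schultens' machinery forces the edge manifold to be the active component.
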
  

\noindent \textit{Proof of claim.}
We proceed by contradiction. Suppose that the claim is false. By \cite[Proposition 7.5, Lemma 7.10]{schultens:heegaard}, $T$ may be isotoped in $E(k)$ so that 
\begin{itemize}
\item $F'$ is pseudo-horizontal, 
\item $F_e$ consists of incompressible annuli (with at least two spanning annuli), and
\item $F''$ consists of vertical annuli.
\end{itemize}
Since $F$ has genus $2$, $F'$ has a twice punctured torus component and possibly annulus components. 
Let $f \subset Z'$ be a fiber so that $\hat{F}=\cl{F' - N(f)}$ is horizontal in $\hat{Z}=\cl{Z' - N(f)}$.
By \cite[VI.25,VI.26,VI.28]{jaco:lectures} and \cite[Corollary 3.2]{waldhausen:irreducible}, the components of $\hat{F}$ are all parallel in $\hat{Z}$.
It follows that $F'$ is connected and that $f$ is nonseparating on $F'$; so $\hat{F}$ is a $4$--punctured sphere.

There are two cases:
\begin{itemize}
\item[(i)] $f$ is an exceptional fiber, or
\item[(ii)] $f$ is a regular fiber.
\end{itemize}
Suppose that we are in case (i). 
Thus $\hat{Z}$ is a cable space. By the classification in \cite[Lemma 3.1]{gl:incompressible}, a connected horizontal planar surface in $\hat{Z}$ must meet one of the boundary tori of $\hat{Z}$ in a single curve. By construction, $\hat{F}$ has a pair of boundary circles on each boundary torus of $\hat{Z}$, a contradiction. So case (i) cannot occur. 

Suppose that we are in case (ii). 
Then $\hat{Z}$ is a Seifert-fibered space over an annulus $\hat{B}$ with two exceptional fibers of orders $q_1$ and $q_2$. 
The map $\hat{Z} \rightarrow \hat{B}$ that projects fibers to points restricts to an $n$--fold branched covering $\hat{F} \rightarrow \hat{B}$. As explained in \cite{hatcher:notes}, the Euler characteristics of $\hat{F}$ and $\hat{B}$ are related by the formula
$$\chi(\hat{F}) = n \left( \chi(\hat{B}) \ - \ \sum_{i=1}^{2} \left(1 - \frac{1}{q_i}\right) \right) \ .$$
The equation simplifies to  
$-2 = n \left( -2 + \frac{1}{q_1} + \frac{1}{q_2} \right)$.
The only solution is $n=q_1=q_2=2$, but this makes $Z'$ the twisted $I$--bundle over the Klein bottle, a contradiction to the hypothesis of Proposition~\ref{prop:annular}. So case (ii) cannot occur. This completes the proof of the claim.
\vskip7pt
By Claim \ref{claim:active_edge}, we may isotope $T$ so that $T \times I$ is the active component. Since $F$ is a genus 2 surface, $F'$ and $F''$ must be vertical annuli. 

If part (1) of \cite[Theorem 1.1]{schultens:heegaard}  holds, then $F_e$ must contain a product annulus. This makes $\d F'$ parallel in $T \times I$ to $\d F''$ . This makes $E(k)$ a Seifert manifold, contrary to assumption.

Now suppose that part (2) of \cite[Theorem 1.1]{schultens:heegaard} holds. Then either $V \cap (T \times I)$ or $W \cap (T \times I)$ is a collar of $\Gamma=(c' \times 0) \cup (p \times I) \cup (c'' \times 1)$ where $c'$ and $c''$ are simple closed curves on $T$ with $c' \cap c'' = p$. See Figure~\ref{fig:active_component} and Figure~\ref{fig:active_component_gamma}. 

\begin{figure}[htbp]
  \centering
  
   \labellist
     \small \hair 2pt
     \pinlabel {$\color{green}{\bs{c' \times 0}}$} [br] at 118 501
     \pinlabel {$\color{red}{\bs{c'' \times 0}}$} [tr] at 67 241
     \pinlabel {$\color{green}{\bs{c' \times 1}}$} [bl] at 734 501
     \pinlabel {$\color{red}{\bs{c'' \times 1}}$} [tl] at 774 241
     \pinlabel {$\color{blue}{\bs{p \times I}}$} at 419 342
     \endlabellist  
  
  \includegraphics[width=3in]{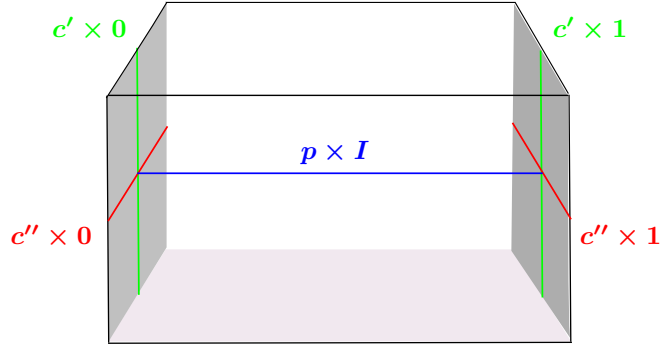}
  \caption{The active component $T \times I$ with indicated copies of $c'$ and $c''$ on $T \times 0$ and $T \times 1$.}
   \label{fig:active_component}
\end{figure}

\begin{figure}[htbp]
  \centering
  
   \labellist
     \small \hair 2pt
     \pinlabel {$\color{blue}{\Gamma}$} at 419 342
     \endlabellist  
  
  \includegraphics[width=3in]{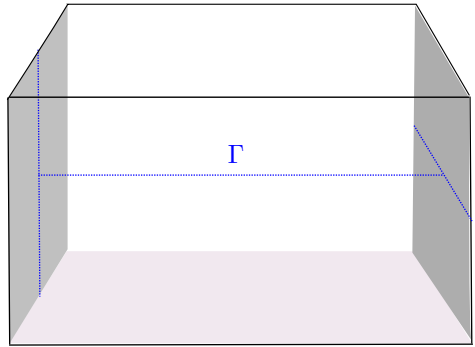}
  \caption{The graph $\Gamma$ is formed by the curves $c' \times 0$, $c'' \times 1$ and $p \times I$.}
  \label{fig:active_component_gamma}
\end{figure}

In the case that $V \cap (T \times I)$ is a neighborhood of $\Gamma$, a tunnel of the splitting can be obtained by taking a straight arc from $\d V$ through $T$ to a core of $Z'$. The case in which $W \cap (T \times I)$ is a neighborhood of $\Gamma$ is similar to the previous case, since $F_e$ cuts $T \times I$ into homeomorphic components.  It now follows from Lemma \ref{lem:1-bridge} that $k$ is a $(1,1)$ knot in $M$.
\end{proof}

\section{Proof of the Main Theorem}
\begin{thm*}
Let $k$ be a nonsimple prime tunnel number one knot in a lens space $M$, where $M$ contains no embedded Klein bottles. Then $k$ is a $(1,1)$ knot.
\end{thm*}

\begin{proof}
Assume the hypotheses. By Corollary~\ref{cor:haken}, $k$ is either a trivial knot, a torus knot, or a satellite knot. The theorem obviously holds if $k$ is a trivial knot or a torus knot. Now assume that $k$ is a satellite knot. By Propositions \ref{prop:anannular} and \ref{prop:annular}, we conclude that $k$ is a $(1,1)$ knot.  
\end{proof}

\bibliographystyle{plain}
\bibliography{tk1_Refs}

\begin{thebibliography}{10}

\bibitem{boyer:dehn}
Steven Boyer.
\newblock Dehn surgery on knots.
\newblock In {\em Handbook of geometric topology}, pages 165--218.
  North-Holland, Amsterdam, 2002.

\bibitem{bw:nonorientable}
Glen~E. Bredon and John~W. Wood.
\newblock Non-orientable surfaces in orientable {$3$}-manifolds.
\newblock {\em Invent. Math.}, 7:83--110, 1969.

\bibitem{cgls:dehn}
Marc Culler, C.~McA. Gordon, J.~Luecke, and Peter~B. Shalen.
\newblock Dehn surgery on knots.
\newblock {\em Ann. of Math. (2)}, 125(2):237--300, 1987.

\bibitem{doll:generalized}
H.~Doll.
\newblock A generalized bridge number for links in {$3$}-manifolds.
\newblock {\em Math. Ann.}, 294(4):701--717, 1992.

\bibitem{eudave:nonsimple}
Mario Eudave~Mu{\~n}oz.
\newblock On nonsimple {$3$}-manifolds and {$2$}-handle addition.
\newblock {\em Topology Appl.}, 55(2):131--152, 1994.

\bibitem{eudave:incompressible}
Mario Eudave-Mu{\~n}oz.
\newblock Incompressible surfaces and {$(1,1)$}-knots.
\newblock {\em J. Knot Theory Ramifications}, 15(7):935--948, 2006.

\bibitem{eudave:incompressible2}
Mario Eudave-Mu{\~n}oz.
\newblock Incompressible surfaces and {$(1,2)$}-knots.
\newblock In {\em Workshop on {H}eegaard {S}plittings}, volume~12 of {\em Geom.
  Topol. Monogr.}, pages 35--87. Geom. Topol. Publ., Coventry, 2007.

\bibitem{gordon:dehn_surgery}
C.~McA. Gordon.
\newblock Dehn surgery and satellite knots.
\newblock {\em Trans. Amer. Math. Soc.}, 275(2):687--708, 1983.

\bibitem{gordon:primitive}
C.~McA. Gordon.
\newblock On primitive sets of loops in the boundary of a handlebody.
\newblock {\em Topology Appl.}, 27(3):285--299, 1987.

\bibitem{gordon:dehn_filling}
C.~McA. Gordon.
\newblock Dehn filling: a survey.
\newblock In {\em Knot theory (Warsaw, 1995)}, volume~42 of {\em Banach Center
  Publ.}, pages 129--144. Polish Acad. Sci., Warsaw, 1998.

\bibitem{gl:incompressible}
C.~McA. Gordon and R.~A. Litherland.
\newblock Incompressible planar surfaces in {$3$}-manifolds.
\newblock {\em Topology Appl.}, 18(2-3):121--144, 1984.

\bibitem{hatcher:notes}
Allen Hatcher.
\newblock Notes on basic {$3$}--manifold topology.
\newblock Available at
  \href{http://www.math.cornell.edu/~hatcher}{\texttt{http://www.math.cornell.%
edu/\~{}hatcher}}.

\bibitem{hayashi:satellite}
Chuichiro Hayashi.
\newblock Satellite knots in 1-genus 1-bridge positions.
\newblock {\em Osaka J. Math.}, 36(3):711--729, 1999.

\bibitem{heil:elementary}
Wolfgang Heil.
\newblock Elementary surgery on {S}eifert fiber spaces.
\newblock {\em Yokohama Math. J.}, 22:135--139, 1974.

\bibitem{jaco:lectures}
William Jaco.
\newblock {\em Lectures on three-manifold topology}, volume~43 of {\em CBMS
  Regional Conference Series in Mathematics}.
\newblock American Mathematical Society, Providence, R.I., 1980.

\bibitem{jaco:adding}
William Jaco.
\newblock Adding a {$2$}-handle to a {$3$}-manifold: an application to property
  {$R$}.
\newblock {\em Proc. Amer. Math. Soc.}, 92(2):288--292, 1984.

\bibitem{jt:tunnel}
Jesse Johnson and Abigail Thompson.
\newblock On tunnel number one knots that are not $(1,n)$.
\newblock \href{http://front.math.ucdavis.edu/0606.5226}{arXiv:
  math.GT/0606226}.

\bibitem{kobayashi:structures}
Tsuyoshi Kobayashi.
\newblock Structures of the {H}aken manifolds with {H}eegaard splittings of
  genus two.
\newblock {\em Osaka J. Math.}, 21(2):437--455, 1984.

\bibitem{mms:high}
Yair~N. Minsky, Yoav Moriah, and Saul Schleimer.
\newblock High distance knots.
\newblock {\em Algebr. Geom. Topol.}, 7:1471--1483, 2007.

\bibitem{mr:heegaard}
Yoav Moriah and Hyam Rubinstein.
\newblock Heegaard structures of negatively curved {$3$}-manifolds.
\newblock {\em Comm. Anal. Geom.}, 5(3):375--412, 1997.

\bibitem{ms:unknotting}
Kanji Morimoto and Makoto Sakuma.
\newblock On unknotting tunnels for knots.
\newblock {\em Math. Ann.}, 289(1):143--167, 1991.

\bibitem{msy:examples}
Kanji Morimoto, Makoto Sakuma, and Yoshiyuki Yokota.
\newblock Examples of tunnel number one knots which have the property
  ``{$1+1=3$}''.
\newblock {\em Math. Proc. Cambridge Philos. Soc.}, 119(1):113--118, 1996.

\bibitem{norwood:every}
F.~H. Norwood.
\newblock Every two-generator knot is prime.
\newblock {\em Proc. Amer. Math. Soc.}, 86(1):143--147, 1982.

\bibitem{rolfsen:knots90}
Dale Rolfsen.
\newblock {\em Knots and links}, volume~7 of {\em Mathematics Lecture Series}.
\newblock Publish or Perish Inc., Houston, TX, 1990.
\newblock Corrected reprint of the 1976 original.

\bibitem{scharlemann:tunnel}
Martin Scharlemann.
\newblock Tunnel number one knots satisfy the {P}oenaru conjecture.
\newblock {\em Topology Appl.}, 18(2-3):235--258, 1984.

\bibitem{scharlemann:heegaard}
Martin Scharlemann.
\newblock Heegaard splittings of compact 3-manifolds.
\newblock In {\em Handbook of geometric topology}, pages 921--953.
  North-Holland, Amsterdam, 2002.

\bibitem{schultens:heegaard}
Jennifer Schultens.
\newblock Heegaard splittings of graph manifolds.
\newblock {\em Geom. Topol.}, 8:831--876 (electronic), 2004.

\bibitem{seifert:topologie}
H.~Seifert.
\newblock Topologie {D}reidimensionaler {G}efaserter {R}\"aume.
\newblock {\em Acta Math.}, 60(1):147--238, 1933.

\bibitem{thompson:disjoint}
Abigail Thompson.
\newblock The disjoint curve property and genus {$2$} manifolds.
\newblock {\em Topology Appl.}, 97(3):273--279, 1999.

\bibitem{waldhausen:irreducible}
Friedhelm Waldhausen.
\newblock On irreducible {$3$}-manifolds which are sufficiently large.
\newblock {\em Ann. of Math. (2)}, 87:56--88, 1968.

\end{thebibliography}

\end{document}